\newtheorem{theorem}{Theorem}[section]        
\newtheorem{lemma}[theorem]{Lemma}
\newtheorem{conjecture}[theorem]{Conjecture}
\theoremstyle{remark}
\theoremstyle{definition}  
\newtheorem{definition}[theorem]{Definition}   
\def \epsilon{\varepsilon}
\def \l{\ell}
\def\N{\mathbb{N}}
\def\Z{\mathbb{Z}}
\def\P{\mathcal{P}}
\def\bar#1{\overline{#1}} 
\begin{document}
\title{Notes and Computations On Forbidden Differences}  
\author{Christian Dean, \quad Haley Havard, \quad Elizabeth Hawkins, \\ Patch Heard, \quad Andrew Lott, \quad Alex Rice}
 
\begin{abstract} We explore from several perspectives the following question: given $X\subseteq \Z$ and $N\in \N$, what is the maximum size $D(X,N)$ of $A\subseteq \{1,2,\dots,N\}$ before $A$ is forced to contain two distinct elements that differ by an element of $X$? The set of forbidden differences, $X$, is called \textit{intersective} if $D(X,N)=o(N)$, with the most well-studied examples being $X=S=\{n^2: n\in \N\}$ and $X=\P-1=\{p-1: p\text{ prime}\}$. In addition to some new results, including exact formulas and estimates for $D(X,N)$ in some non-intersective cases like $X=\P$ and $X=S+k$, $k\in \N$, we also provide a comprehensive survey of known bounds and extensive computational data. In particular, we utilize an existing algorithm for finding maximum cliques in graphs to determine $D(S,N)$ for $N\leq 300$ and $D(\P-1,N)$ for $N\leq 500$. None of these exact values appear previously in the literature.
  
\end{abstract}

\address{Department of Mathematics, Millsaps College, Jackson, MS 39210}   
\email{deanmchris@gmail.com} 
\email{hawkie@millsaps.edu}   
\email{patchh@icloud.com} 
\email{riceaj@millsaps.edu}

\address{Department of Mathematics, University of Georgia, Athens, GA 30602}  
\email{heh25323@uga.edu} 
\email{andrew.lott@uga.edu}

\maketitle   
\setlength{\parskip}{5pt}   
 
\section{Introduction} \label{intro}

Dating at least to the 1970s, a popular family of questions and results in arithmetic combinatorics concerns the existence of certain differences in dense sets of integers. For $X\subseteq \Z$ and $N\in \N$, let $$D(X,N)=\max \left\{|A|: A\subseteq [N], \ (A-A)\cap X \subseteq \{0\} \right\},$$ where $[N]=\{1,2,\dots,N\}$ and $A-A=\{a-b: a,b\in A\}$. In other words, $D(X,N)$ is the threshold such that a subset of $[N]$ with more than $D(X,N)$ elements necessarily contains two distinct elements that differ by an element of $X$. Related definitions in an infinite setting are as follows.

\begin{definition} For $A\subseteq \N$, we define the \textit{density} of $A$ by $\delta(A)=\lim_{N\to \infty} |A\cap[N]|/N$, provided this limit exists, and we define the \textit{upper density}, $\bar{\delta}(A)$, by replacing the limit with limsup. For $X\subseteq \Z$, we define \begin{equation*}\mu(X) = \sup \left\{\bar{\delta}(A): A\subseteq \N, \ (A-A)\cap X\subseteq \{0\}\right\}. \end{equation*} We refer to $A\subseteq \Z$ satisfying $(A-A)\cap X \subseteq \{0\}$ as an \textit{$X$-set}, and we say that $X$ is \textit{intersective} if $\mu(X)=0$. This latter terminology is due to Ruzsa \cite{Ruz3}.
\end{definition}

Cantor and Gordon \cite{CG}, Haralambis \cite{NMH}, and Gupta \cite{Gupta} have investigated $\mu(X)$ in some special cases where $X$ is finite, and we borrow some notation and terminology developed in those papers. However, the majority of existing literature concerns infinite $X$, specifically the determination of whether $X$ is intersective, and quantitative estimates for $D(X,N)$ in intersective cases. The most well-studied cases are the squares, i.e. $X=S=\{n^2: n\in \N\}$, and $X=\mathcal{P}-1=\{p-1: p\in \P\}$, where $\P$ denotes the set of primes, originating from questions of Lov\'asz and Erd\H{o}s, respectively. The intersectivity of the squares was established independently by Furstenberg \cite{Furst} and S\'ark\"ozy \cite{Sark1}, the latter of whom also established intersectivity of $\P-1$ \cite{Sark3}. These results have spawned a great deal of refinements and extensions, and in Section \ref{kbs}, we provide, via tables and accompanying context, a survey of known upper and lower bounds on $D(X,N)$ in both classical and alternative intersective cases.

In addition to this summary of known results for intersective sets, we also explore $D(X,N)$ in some special nonintersective cases. In particular, we establish the following exact formulas in Section \ref{forms}, which are compatible with a more general conjecture we state in Section \ref{prelim}. Throughout, we use $1_E$ to denote the characteristic function of a set $E$.

\begin{theorem}\label{pform} The formula $D(\P,N)=\lceil N/4 \rceil + 1_E(N)$ holds for all $N\in \N$, where $E=\{2,3,4,11,12\}$. 
\end{theorem}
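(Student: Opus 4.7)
The arithmetic progression $\{n \in [N] : n \equiv 1 \pmod{4}\}$ has size $\lceil N/4 \rceil$, and its pairwise differences lie in $4\mathbb{Z} \setminus \{0\}$, so they are all composite. For $N \in \{2, 3, 4\}$, the set $\{1, 2\}$ is a valid $\mathcal{P}$-set of size $2$; for $N \in \{11, 12\}$, the set $\{1, 2, 10, 11\}$ has all nonzero differences in $\{1, 8, 9, 10\}$ and is therefore a valid $\mathcal{P}$-set of size $4$. Together these constructions give $D(\mathcal{P}, N) \geq \lceil N/4 \rceil + 1_E(N)$.

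\textbf{Upper bound via a windowing lemma.} The key ingredient is the local bound $|A \cap [m, m+7]| \leq 2$ for every valid $A$ and every integer $m$. To prove this, suppose $a < b < c$ all lie in $[m, m+7]$; then the pairwise differences $b - a$, $c - b$, $c - a$ are at most $7$ and are nonprime, hence lie in $\{1, 4, 6\}$. However, the sum of any two values from $\{1, 4, 6\}$ lies in $\{2, 5, 7, 8, 10, 12\}$, none of which is again in $\{1, 4, 6\}$, contradicting $c - a = (b - a) + (c - b)$. As an immediate corollary, by translation invariance, $D(\mathcal{P}, N) \leq 2 + D(\mathcal{P}, N - 8)$. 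For $N \geq 21$ we have $N - 8 \geq 13 > \max E$, so the inductive hypothesis supplies $D(\mathcal{P}, N - 8) = \lceil (N-8)/4 \rceil$, and we conclude $D(\mathcal{P}, N) \leq \lceil N/4 \rceil$ exactly.

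\textbf{Base cases and main obstacle.} The induction requires the formula for $N \leq 20$, and the chief difficulty lies in $N \in \{10, 19, 20\}$: here $N - 8 \in E$ inflates the recursive bound by $1$ while $N$ itself is not in $E$. For every other $N \leq 20$, the same reduction (applied once or twice) already yields the correct value. For $N \in \{10, 19, 20\}$, I would argue by the \emph{block decomposition} of $A$: each maximal run of consecutive integers has length $1$ or $2$, since a run of length $3$ would contain the prime difference $2$. The minimum leftmost-to-leftmost distance between successive blocks is $4$ for two singletons, $8$ for a singleton followed by a pair, and $9$ for any pair followed by something, each dictated by the smallest relevant pattern of consecutive nonprimes. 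Enumerating the possible distributions $(s, p)$ of singletons and pairs with $s + 2p = \lceil N/4 \rceil + 1$ and summing the minimum gaps shows the resulting span always exceeds $N$, a contradiction. This same span analysis identifies $E$ itself: its elements are precisely the $N$ small enough that either a single pair $\{1, 2\}$ or the block pattern $\{1, 2, 10, 11\}$ outperforms the progression $\{1, 5, 9, \ldots\}$ before any additional singleton has room to appear.
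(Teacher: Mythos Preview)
Your windowing lemma and its proof are genuinely nicer than the paper's direct case analysis: the observation that no two elements of $\{1,4,6\}$ sum to a third element of $\{1,4,6\}$ cleanly gives $D(\P,8)=2$ in one line, and the induction $D(\P,N)\le 2+D(\P,N-8)$ is a tidy replacement for the paper's explicit partitions of $[N]$. The identification of $N\in\{10,19,20\}$ as the only nontrivial base cases is also correct.

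However, the block-decomposition argument as stated does \emph{not} dispose of $N=20$. Your claimed minimum consecutive-block gaps (singleton--singleton $\ge 4$, singleton--pair $\ge 8$, pair--anything $\ge 9$) are right, but for $(s,p)=(0,3)$ they only force the leftmost elements of three pairs to sit at $1,10,19$, giving a maximum element of $20$---which fits in $[20]$, so the span does \emph{not} exceed $N$. The configuration $\{1,2,10,11,19,20\}$ survives your consecutive-gap test. What actually kills it is a \emph{non-adjacent} constraint you never invoked: for the outer pairs $\{a,a+1\}$ and $\{c,c+1\}$ one needs $c-a-1,\,c-a,\,c-a+1$ all nonprime, and the smallest such triple beyond $(8,9,10)$ and $(14,15,16)$ is $(24,25,26)$; since $c-a\ge 18$ and $c-a\le 19$, this is impossible. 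So the gap is easily repaired, but ``summing the minimum gaps shows the resulting span always exceeds $N$'' is false as written for $N=20$. The paper handles $N=20$ differently: it assumes $1\in A$, partitions $[20]$ into $[8]\cup[9,16]\cup[17,20]$, and observes that $18,20$ are excluded (differences $17,19$ from $1$) while $17,19$ cannot both appear, so $|A\cap[17,20]|\le 1$.
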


\begin{theorem}\label{sform} The formula $D(S+1,N)=\lceil N/3 \rceil + (1_E+1_{\{9,24\}})(N)$ holds for all $N\in \N$, where $$E=\{2,3,5,6,8,9,10,11,12, 17,18,20,21, 23, 24,25,26, 27\}.$$
\end{theorem}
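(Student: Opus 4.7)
The plan is to establish matching lower and upper bounds on $D(S+1,N)$, with the lower bound from explicit constructions and the upper bound from a block-based combinatorial analysis. For the lower bound, the generic construction is $A=\{n\in[N]:n\equiv 1\pmod 3\}$, which is an $(S+1)$-set of size $\lceil N/3\rceil$ because $n^2\equiv 0$ or $1\pmod 3$ forces every element of $S+1$ to be coprime to $3$. For the finitely many $N\in E$ (respectively $N\in\{9,24\}$), I would exhibit explicit $(S+1)$-sets of size $\lceil N/3\rceil+1$ (respectively $\lceil N/3\rceil+2$); for instance, $\{1,2,5,8,9\}$ witnesses $D(S+1,9)\geq 5$ and $\{1,2,5,8,9,16,17,20,23,24\}$ witnesses $D(S+1,24)\geq 10$. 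Such witnesses are readily produced by the max-clique computations already used in the paper.

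The upper bound is the main challenge. I would partition $[N]$ into consecutive blocks $B_k=\{3k-2,3k-1,3k\}$ and classify each block by its intersection $A\cap B_k$. Since $2\in S+1$, we have $|A\cap B_k|\leq 2$, with equality only when $A\cap B_k$ equals $\{3k-2,3k-1\}$ or $\{3k-1,3k\}$ (the "doubleton" configurations must consist of two consecutive integers). A short case check using $2,5\in S+1$ shows that a doubleton in $B_k$ forces $|A\cap B_{k+1}|\leq 1$ at a specifically constrained position, and propagating the forbidden differences $10,17\in S+1$ further restricts the permissible types of $B_{k+2},B_{k+3},B_{k+4},\ldots$. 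The goal is to show that for $N$ beyond an explicit threshold $N_0$, the averaged block weight is at most $1$, yielding $|A|\leq\lceil N/3\rceil$.

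The hard part will be packaging this into a clean argument. The natural approach is to set up a finite-state automaton whose states record the types of a sliding window of blocks long enough to capture the constraint from $17\in S+1$, with transitions dictated by all forbidden differences up to a suitable cutoff, and then verify that the maximum mean cycle weight equals exactly $1$. The boundary (small-$k$) effects are precisely what allow the $+1$ or $+2$ excess for small $N$, and it is from this boundary analysis that the sets $E$ and $\{9,24\}$ emerge naturally. For the remaining $N\leq N_0$, the formula is verified by the same max-clique algorithm used to produce the lower-bound witnesses, which simultaneously certifies that no further exceptional values of $N$ exist.
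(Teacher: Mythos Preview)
Your lower-bound argument and your plan to verify small cases by max-clique computation match the paper exactly. The difference lies entirely in the upper bound for large $N$, and here the paper takes a much shorter route than your proposed automaton. Once the computer establishes the formula for all $N\le 50$---in particular $D(S+1,15)=5$---the paper simply invokes subadditivity. For $N>42$ one partitions $[N]$ into consecutive blocks of length $15$ together with one remainder block of length $k\in\{0,1,4,7,8,13,14,32,33,35,36,39,40,41,42\}$ chosen so that $k\equiv N\pmod{15}$ and $D(S+1,k)=\lceil k/3\rceil$; then
\[
D(S+1,N)\ \le\ 5\cdot\frac{N-k}{15}+\Bigl\lceil\frac{k}{3}\Bigr\rceil\ =\ \Bigl\lceil\frac{N}{3}\Bigr\rceil.
\]
Your finite-state automaton on $3$-blocks, if carried to completion, would ultimately establish that some fixed-length window forces density at most $1/3$; that is precisely the content of $D(S+1,15)=5$, which the paper simply reads off the computer output rather than deriving by hand. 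So the two approaches converge on the same key fact, but the paper's route avoids all of the doubleton case analysis, the propagation of constraints from $10,17,\ldots$, and the mean-cycle-weight verification. Your approach would in principle be more ``explanatory'' (one sees \emph{why} $15$ works), but at substantial cost in length, and the boundary analysis needed to pin down $E$ and $\{9,24\}$ exactly would still devolve to a finite check.
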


In Section \ref{compsec}, we shift to a computational approach to the well-studied quantities $D(S,N)$ and $D(\P-1,N)$. Specifically, we leverage existing algorithms for finding maximum cliques in graphs to determine exact values of $D(S,N)$ for $N \leq 300$ and $D(\P-1,N)$ for $N \leq 500$, which have not previously appeared in the literature. We provide pseudocode descriptions of our computations, and also highlight some examples of notably dense sets lacking square or shifted prime differences.

\section{Preliminaries} \label{prelim}
Before proceeding further, we establish the connection between the finitary and infinitary formulations given in Section \ref{intro}. The following is similar to \cite[Theorem 1]{Ruz3}, but with a different approach and construction. 

\begin{theorem} For every $X\subseteq\Z$, we have $D(X,N)/N \to \mu(X)$ as $N\to \infty$. Futher, there exists an $X$-set $A\subseteq \N$ with $\bar{\delta}(A)=\mu(X)$. 
\end{theorem}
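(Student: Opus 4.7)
The plan is to split the proof into three parts: establishing that the limit $\alpha := \lim_N D(X,N)/N$ exists, proving the easy inequality $\mu(X) \le \alpha$, and constructing an $X$-set $A \subseteq \N$ with $\bar\delta(A) \ge \alpha$, which simultaneously yields $\mu(X) = \alpha$ and the ``Further'' conclusion.

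For the limit's existence I would run a sliding-window (Fekete-style) argument. Fix $M$ and for $N > M$ let $A^* \subseteq [N]$ realize $D(X,N)$. For each $t \in \{0,\dots,N-M\}$ the translate $(A^* \cap [t+1,t+M]) - t$ is an $X$-set in $[M]$, so has at most $D(X,M)$ elements. Summing over $t$ and double-counting the contribution of each element of $A^*$ gives $M(D(X,N) - 2(M-1)) \le (N-M+1) D(X,M)$, hence $D(X,N)/N \le D(X,M)/M + 2M/N$. Letting $N \to \infty$ and then $M \to \infty$ yields $\lim_N D(X,N)/N = \inf_M D(X,M)/M =: \alpha$. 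The inequality $\mu(X) \le \alpha$ is then immediate from $|A \cap [N]| \le D(X,N)$ for any $X$-set $A \subseteq \N$.

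For the construction I would invoke a Furstenberg-style correspondence. Let $A_N \subseteq [N]$ realize $D(X,N)$ and form the Borel probability measure $\mu_N = \frac{1}{N}\sum_{t=1}^N \delta_{A_N - t}$ on the compact metric space $\{0,1\}^{\Z}$ equipped with the shift $T$. By weak-$*$ compactness, pass to a subsequential limit $\mu$. This $\mu$ is shift-invariant because $\|T_*\mu_N - \mu_N\|_{TV} \le 2/N \to 0$, and is supported on $X$-sets because ``$B$ is an $X$-set'' is the countable intersection over $x \in X \setminus \{0\}$ and $a \in \Z$ of the clopen conditions $\{B : a \notin B \text{ or } a+x \notin B\}$, and this closed set has $\mu_N$-measure one for every $N$. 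The clopen cylinder $C = \{B : 0 \in B\}$ satisfies $\mu_N(C) = |A_N|/N$, so along the convergent subsequence $\mu(C) = \alpha$.

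To extract a single witness, shift-invariance gives $\int |B \cap [0,N-1]|/N\, d\mu(B) = \alpha$ for every $N$, and reverse Fatou yields $\int \limsup_N |B \cap [0,N-1]|/N\, d\mu(B) \ge \alpha$. Hence some $B$ in the support of $\mu$ is simultaneously an $X$-set and satisfies $\limsup_N |B \cap [0,N-1]|/N \ge \alpha$; setting $A = B \cap \N$ yields an $X$-set in $\N$ with $\bar\delta(A) \ge \alpha$, closing the loop. The main obstacle is the bookkeeping of the correspondence principle, particularly verifying the closed-set expression for the $X$-set condition and then combining weak-$*$ convergence with the reverse Fatou step. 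I expect a purely elementary concatenation (gluing blocks $A_{M_k}$ with gaps) to fail when $X$ lacks arbitrarily long gaps in controllable positions (e.g., $X = \P - 1$), which is what seems to force the measure-theoretic detour.
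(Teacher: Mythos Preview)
Your proof is correct, but it takes a genuinely different route from the paper's. You first establish that $\lim_N D(X,N)/N$ exists via a sliding-window (Fekete-type) argument, and then produce the witness $X$-set through the Furstenberg correspondence: push forward to measures on $\{0,1\}^{\Z}$, take a weak-$*$ limit, and use reverse Fatou to find a single $X$-set with upper density at least $\alpha$. The paper instead never invokes measure theory. It shows $\mu(X)\le \liminf_N D(X,N)/N$ directly (as you do), and for the other direction sets $\delta=\limsup_N D(X,N)/N$, uses a pigeonhole argument to upgrade this to the statement that \emph{every} $[N]$ contains an $X$-set of size $\ge \delta N$, and then runs a K\"onig's-lemma-style tree construction: for each $k$ it looks at $X$-sets $B\in G(2^k)$ of density $\ge\delta$, and inductively chooses $A_{j+1}\supseteq A_j$ so that $A_j$ is a translate of some element of $G(2^j)$ that extends to $[2^k]$ for all $k>j$. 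The union $A=\bigcup_j A_j$ is then an $X$-set with upper density $\ge\delta$ on one of the half-lines.

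Each approach has its advantages. Yours is conceptually clean and plugs into the standard ergodic framework; once one is comfortable with the correspondence principle, the verification (closed support condition, shift-invariance, reverse Fatou) is routine. The paper's argument is entirely elementary and self-contained, requiring no topology or measure theory, and in particular shows that your worry in the final sentence is unfounded: the K\"onig's-lemma tree construction sidesteps the ``gaps in $X$'' issue that would doom a naive concatenation, because one never glues independent blocks but rather selects nested dyadic blocks that are already consistent. As a minor note, your Fekete step is essentially the subadditivity $D(X,N+M)\le D(X,N)+D(X,M)$ (stated as a lemma elsewhere in the paper) plus Fekete's lemma, so you could shorten that part.
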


\begin{proof} Fix $X\subseteq \Z$. To first establish an upper bound on $\mu(X)$, suppose $A\subseteq \N$ is an $X$-set. By definition of $D(X,N)$, $A$ satisfies $|A\cap [N]|\leq D(X,N)$ for all $N\in \N$. Dividing by $N$ and passing to limits, we have \begin{equation}\label{ub} \bar{\delta}(A) = \limsup_{N\to \infty} \frac{|A\cap [N]|}{N}\leq \liminf_{N\to \infty} \frac{D(X,N)}{N}. \end{equation} For the lower bound, let $\delta= \limsup_{N\to \infty} D(X,N)/N$, so it suffices to construct an $X$-set of upper density at least $\delta$. For every $\epsilon>0$, we know by definition of $\limsup$ that for every $M>0$, there exists $N\geq M$ and an $X$-set $A\subseteq [N]$ with $|A|>  (\delta-\varepsilon)N$. We claim something stronger, that for every $N\in \N$ there exists an $X$-set $A\subseteq[N]$ with $|A|\geq \delta N$.

\noindent To verify the claim, first fix $\epsilon>0$, $N\in \N$, and an $X$-set $A' \subseteq [N']$ with $N'>4N/\epsilon$ and $|A'|>(\delta-\epsilon/4)N'$. Let $m=\lfloor N'/N \rfloor$, noting that $$|A'\cap[mN]|\geq |A'|-N > (\delta-\epsilon/4)N'-N > (\delta-\epsilon/2)N'\geq (\delta-\epsilon/2)mN.$$ Partitioning $[mN]$ into $m$ disjoint intervals of length $N$, the pigeonhole principle guarantees that $A'$ has density greater than $\delta-\epsilon$ on at least one interval. Since differences are invariant under translation, this high density interval corresponds to an $X$-set $A\subseteq [N]$ with $|A|>(\delta-\epsilon)N$. However, since $|A|$ is an integer and $\epsilon$ was chosen independent of $N$, we can choose $\epsilon$ small enough that $|A|\geq \lceil (\delta-\epsilon)N \rceil \geq \delta N$.

\noindent We now construct an infinite $X$-set as the union of an increasing sequence of finite sets. For each $N\in \N$, we let $G(N)=\{B\subseteq [N]: B \text{ is an }X\text{-set}, \ |B|\geq \delta N \}$, which the claim verified above ensures is nonempty. By the pigeonhole principle, for every $k\in \N$ and every $B\in G(2^k)$, there exist $B_j\in G(2^j)$ for $0\leq j \leq k$, where $B_k=B$ and a translation of $B_j$ is either the first or second half of $B_{j+1}$ for $0\leq j \leq k-1$. In this setting, if $i<j$, we say $B_i$ \textit{extends to} $[2^j]$. 

\noindent We start the process with $A_0=\{1\}$, which certainly extends to $[2^k]$ for all $k\in \N$. Inductively, suppose $j\geq 0$ and $A_{j}$ is a translation containing $1$ of an element of $G(2^j)$, which extends to $[2^k]$ for all $k>j$. We consider the infinitely many extensions of $A_{j}$, noting that, each time, a translation of $A_{j}$ is half of a translation of a set $B\in G(2^{j+1})$. Since $G(2^{j+1})$ is finite, there must exist $\tilde{A}_{j+1}\in G(2^{j+1})$ which occurs as this set $B$ infinitely often, so in other words $\tilde{A}_{j+1}$ also extends to $[2^k]$ for all $k>j+1$. We then define $A_{j+1}$ as the translation of $\tilde{A}_{j+1}$ that makes $A_j \subseteq A_{j+1}$. In other words, $A_{j+1}$ is obtained by adding a well-chosen block of length $2^j$ to either the left or right of $A_j$, maintaining density of at least $\delta$ and maintaining the property of extending to all higher levels. In particular, for each $j\geq 0$, $A_j$ is contained in an interval of length $2^j$ containing 1, which may contain both positive and negative integers.  

\noindent Finally, we define $A=\cup_{j=0}^{\infty} A_j$, so $A\subseteq \Z$ is an $X$-set, and for every $j\in \N$, $A$ has density at least $\delta$ on an interval of length $2^j$ containing $1$. This ensures that the $\limsup$ of either $|A\cap[N]|/N$ or $|(-A)\cap[N]|/N$ is at least $\delta$. Defining $A'$ to be either $A\cap \N$ or $(-A) \cap \N$, we have $\mu(X)\geq \bar{\delta}(A') \geq \delta,$ which combines with \eqref{ub} to complete the proof.\end{proof}

The following is a tangible necessary condition for intersectivity. 

\begin{definition} We say $X\subseteq \Z$ is \textit{locally intersective} if $X$ contains a nonzero multiple of every positive integer. This is equivalent to the statement that, for every infinite $A\subseteq \N$ and every $m\in \N$, the congruence $x-y\equiv z \ (\text{mod }m)$ is solvable with distinct $x,y\in A$ and $z\in X$, which is the motivation for the terminology. 
\end{definition}

\noindent Clearly intersectivity implies local intersectivity, since if $X$ contains no nonzero multiples of $m\in \N$, then the set $A=\{n\in \N: n\equiv 1 \ (\text{mod }m)\}$ is an $X$-set, hence $D(X,N)\geq \lceil N/m \rceil$ and $\mu(X)\geq 1/m$. The converse, however, is false. For one large family of counterexamples, recall that a \textit{lacunary sequence} $\{x_n\}$ of positive numbers satisfies $x_{n+1}\geq rx_n$ for some fixed $r>1$. It is known that a finite union of lacunary sequences in $\N$ is \textit{not} intersective (see \cite{peres}, for example), but many lacunary sequences, such as the Fibonacci sequence or the sequence of factorials, are locally intersective. Further, the failure of the converse is not solely based on the sparseness of lacunary sequences. Another family of counterexamples is as follows: fix an irrational number $\alpha>5$, let $A=\{\lfloor n\alpha \rfloor:n\in \N\}$, and let $X=\{n\in \N: |n-(k+1/2)\alpha|<1 \text{ for some }k\in \N\}$. By equidistribution, $X$ is locally intersective with $\delta(X)=2/\alpha$, while $A$ is an $X$-set satisfying $\delta(A)=1/\alpha$, hence $X$ is not intersective. 

In contrast with the counterexamples provided above, it follows from a theorem of Kamae and Mend\`es France \cite{KMF} that if $X=h(\Z)$ for a polynomial $h\in \Z[x]$, then $X$ is intersective if and only if it is locally intersective. For this reason, polynomials with locally intersective image, in other words nonzero polynomials with a root at every modulus, are called \textit{intersective polynomials}. Similarly, the set $\P-1$ (resp. $\P+1$) was identified as a candidate for intersectivity because of its local intersectivity, as for every $m\in \N$ there are plenty of primes congruent to $1$ (resp. $-1$) modulo $m$, while all other shifts of $\P$ fail to be locally intersective. More generally, for $h\in \Z[x]$, $h(\P)$ being intersective is equivalent to $h$ having a root modulo $m$ that is coprime to $m$ for every $m \in \N$, a condition known as \textit{$\P$-intersectivity} (see \cite{Rice}).
  
The underlying principle in the previous paragraph can be summarized as follows: for polynomial images of both $\Z$ and $\P$, local obstructions to intersectivity are the \textit{only} obstructions. This idea can be naturally extended to nonintersective cases: the maximal density of an $X$-set, if $X$ is built from polynomials and primes, should be determined by the maximal `local avoidance' of $X$-differences. We summarize this philosophy with the following general conjecture. 

\begin{conjecture} \label{locconj} For a nonzero polynomial $h\in \Z[x]$ and $X=h(\Z)$ or $X=h(\P)$, we have $$\mu(X)=\sup_{m\in \N} \frac{d_X(m)}{m},$$ where $d_X(m)=\max\{|A|: A\subseteq \Z/m\Z, \ (A-A)\cap X_m=\emptyset\},$ and $X_m$ is the set of congruence class modulo $m$ that intersect $X$.
\end{conjecture}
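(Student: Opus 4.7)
The plan is to split the claimed equality into its two inequalities; write $\delta_{\text{loc}}(X)=\sup_{m\in\N} d_X(m)/m$ for brevity. The direction $\mu(X)\geq \delta_{\text{loc}}(X)$ is a direct construction: given $m\in\N$ and $B\subseteq\Z/m\Z$ achieving $|B|=d_X(m)$, the periodic lift $A=\{n\in\N:n\bmod m\in B\}$ has $\delta(A)=|B|/m$, and every nonzero difference of elements of $A$ reduces mod $m$ to a nonzero element of $B-B$, which avoids $X_m$ and hence cannot lie in $X$. Taking $\sup_m$ gives the bound.

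The reverse inequality $\mu(X)\leq \delta_{\text{loc}}(X)$ carries all the depth of the conjecture, as it says that for polynomially structured $X$ local obstructions are the only obstructions; in particular, it subsumes the theorems of Furstenberg--S\'ark\"ozy, S\'ark\"ozy, and their refinements in the intersective case $\delta_{\text{loc}}(X)=0$. I would attack it via a density-increment scheme in the Fourier-analytic tradition of Lucier, Lyall--Magyar, and Rice. Fix $\epsilon>0$, invoke the finitary--infinitary equivalence proved immediately before the conjecture to replace $\mu(X)$ by $D(X,N)/N$ on a sufficiently long interval, and consider an $X$-set $A\subseteq[N]$ with $|A|/N\geq \delta_{\text{loc}}(X)+\epsilon$. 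The no-$X$-difference condition forces the Fourier expression counting $X$-differences in $A$ to vanish, and circle-method estimates on $\widehat{1_{X\cap[N]}}$ (Weyl and Hua bounds for $h(\Z)$, Vinogradov's method with a major/minor arc split for $h(\P)$) localize the obstructing Fourier mass near rationals $a/q$ with $q$ bounded in terms of $\epsilon$ and $\deg h$. This should furnish a sub-arithmetic-progression of common difference dividing some bounded $M=M(\epsilon)$ on which $A$ is quantitatively denser, and iteration terminates (since densities cannot exceed one) at a configuration whose residue-class distribution modulo some $m\leq M$ directly exhibits a set $B\subseteq\Z/m\Z$ avoiding $X_m$-differences with $|B|/m>\delta_{\text{loc}}(X)$, contradicting the definition of the supremum.

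The central obstacle is calibrating the increment against the positive local density. In the intersective regime, any positive density suffices to generate a contradiction through repeated increments; here the increment must instead produce a \emph{new} local obstruction strictly improving on the one already present at the base modulus, which demands a relative Fourier decomposition on a residue class in the spirit of Green--Tao transference. This is especially delicate for $X=h(\P)$, where the primes contribute both additional arithmetic structure and additional singular-series error terms. A secondary subtlety is that $d_X(m)/m$ need not a priori attain its supremum, so one must either show (from the polynomial structure of $h$) that the optimum is realized at some modulus bounded in terms of $\deg h$ and the coefficients of $h$, or run the argument with approximate maximizers throughout. Theorems \ref{pform} and \ref{sform} and the computational tables of Section \ref{compsec} provide concrete calibration points against which any candidate proof can be tested.
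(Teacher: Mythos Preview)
The statement you are addressing is labeled a \emph{conjecture} in the paper, and the paper does not purport to prove it. Immediately after stating Conjecture~\ref{locconj}, the authors remark only that the lower bound $\mu(X)\geq \sup_m d_X(m)/m$ ``can be quickly established with $X$-sets formed from unions of congruence classes,'' and that ``the content of Conjecture~\ref{locconj} lies in the upper bound.'' There is no proof in the paper to compare your proposal against beyond that one-line observation, which your first paragraph reproduces correctly.

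Your second and third paragraphs are therefore not a proof but a research plan for an open problem, and you are candid about this. The density-increment outline you give is the natural line of attack and is consistent with the methods behind the upper bounds surveyed in Section~\ref{kbs}. However, the obstacle you flag---that in the nonintersective case the increment must beat a \emph{positive} local density rather than any positive threshold---is precisely why the conjecture remains open, and nothing in your sketch overcomes it. In particular, the iteration you describe terminates by producing a set $B\subseteq\Z/m\Z$ with $|B|/m>\delta_{\text{loc}}(X)$, but the standard density-increment machinery only guarantees an increment relative to the \emph{current} density, not relative to the local supremum; without a mechanism forcing the terminal density above $\delta_{\text{loc}}(X)$ rather than merely approaching it, there is no contradiction. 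So your proposal correctly handles the direction the paper handles, and correctly identifies but does not resolve the direction the paper leaves open.
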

\noindent We note that $X$ is locally intersective if and only if $d_X(m)=0$ for all $m\in \N$. The purported lower bound for $\mu(X)$ can be quickly established with $X$-sets formed from unions of congruence classes, so the content of Conjecture \ref{locconj} lies in the upper bound.

\section{Primes and polynomials: exact formulas and estimates} \label{forms}

In this section we provide exact formulas or estimates for $D(X,N)$ in some special, nonintersective cases, which are compatible with Conjecture \ref{locconj}, including Theorems \ref{pform} and \ref{sform}. We frequently rely on the translation invariance of differences, meaning $x-y=(x+t)-(y+t)$, which in particular makes $D(X,N)$ a subadditive function of $N$. We include a proof of this standard fact below for completeness.

\begin{lemma}\label{tinv} If $N,M\in \N$ and $X\subseteq \Z$, then $D(X,N+M)\leq D(X,N)+D(X,M)$.
\end{lemma}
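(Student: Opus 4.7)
The plan is to prove the inequality by taking a maximum $X$-set in $[N+M]$, partitioning it into its intersection with $[N]$ and its intersection with $\{N+1,\dots,N+M\}$, and bounding each piece separately using the definition of $D(X,\cdot)$ along with translation invariance of differences.

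First, I would fix $A \subseteq [N+M]$ that is an $X$-set with $|A| = D(X, N+M)$. Write $A_1 = A \cap [N]$ and $A_2 = A \cap \{N+1, N+2, \dots, N+M\}$, so $A = A_1 \sqcup A_2$ and $|A| = |A_1| + |A_2|$. Since $A_1 \subseteq A$, any two distinct elements of $A_1$ differ by an element of $A-A$, which meets $X$ only at $0$; hence $A_1$ is itself an $X$-set contained in $[N]$, and by the definition of $D(X,N)$ we have $|A_1| \leq D(X,N)$.

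For the second piece, I would use translation invariance: the set $A_2 - N := \{a - N : a \in A_2\}$ is a subset of $[M]$, and its difference set equals $A_2 - A_2$, which is contained in $A - A$. Therefore $A_2 - N$ is an $X$-set in $[M]$, giving $|A_2| = |A_2 - N| \leq D(X,M)$. Combining the two bounds yields
\[
D(X, N+M) = |A_1| + |A_2| \leq D(X,N) + D(X,M).
\]

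There is no real obstacle here; the only subtlety is remembering that we need $(A-A) \cap X \subseteq \{0\}$ rather than strict disjointness, which is why restricting to a subset (like $A_1$ or the translate of $A_2$) preserves the $X$-set property. This completes the argument.
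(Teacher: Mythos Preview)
Your proof is correct and follows essentially the same approach as the paper: partition a maximal $X$-set $A\subseteq[N+M]$ into $A\cap[N]$ and $A\cap[N+1,N+M]$, translate the latter by $-N$, and bound each piece by $D(X,N)$ and $D(X,M)$ respectively.
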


\begin{proof} Suppose $N,M\in \N$ and $X\subseteq \Z$. Fix an $X$-set $A\subseteq [N+M]$ with $|A|=D(X,N+M)$. Letting $B=A\cap[N]$ and $C\cap[N+1,N+M]$, we see that both $B$ and $C-N=\{c-N: c\in C\}\subseteq [M]$ are $X$-sets, the latter by translation invariance. By definition of $D$, we have $|B|\leq D(X,N)$ and $|C|=|C-N|\leq D(X,M)$, so $D(X,N+M)=|A|=|B|+|C|\leq D(X,N)+D(X,M)$, as required.
\end{proof}

\noindent We now establish Theorems \ref{pform} and Theorem \ref{sform}, the former by hand and the latter with computer assistance.

\begin{proof}[Proof of Theorem \ref{pform}] The following sets establish the lower bound for  $N\in E$: $\{1,2\}$ for $N=2,3,4$, and $\{1,2,10,11\}$ for $N=11,12$. The lower bound $D(\P,N)\geq \lceil N/4 \rceil$ for all $N\notin E$ is exhibited by $\{n\leq N: n\equiv 1 \pmod{4}\}$. We next show that $D(\P,8)=2$, and hence also $D(\P,N)=2$ for $3\leq N \leq 7$, and $D(\P,N)=4$ for $N=11\leq N \leq 16$. Suppose $A\subseteq [8]$ has no prime differences. By translation invariance, we can assume $1 \in A$ and, consequently, $3,4,6,8 \notin A$. If any $2,5$ or $7$ is present in $A$, then neither of the other two can be, as the difference between each of them is prime. Therefore, $|A|\leq 2$. Now suppose $A\subseteq [10]$ has no prime differences. By translation invariance, we can assume $1 \in A$. If $2$, $5$, or $7$ lie in $A$, then at most one of $9$ and $10$ can also be present in $A$, as each of $2,5,$ and $7$ differ by a prime from at least one of $9$ and $10$. Therefore, $|A| \leq 3$ and $D(\mathcal{P},10)=D(\P,9)=3.$ 

\noindent Suppose $A \subseteq [20]$ has no prime differences, and assume $1\in A$.  The interval $[20]$ can be split into two sets of eight ($[8], [9,16]$) and one set of four ($[17,20]$). As $D(\mathcal{P},8)=2$, we know $|A|\leq 2 + 2 +|A\cap[17,20]|$. Since $1 \in A$, we know  $18,20 \notin A$. This leaves only $17$ and $19$ as possible elements of $|A\cap[17,20]|$, and since they are separated by a prime difference of 2, only one of these numbers can be in $A$, hence $|A|\leq 5$ and $D(\P,20)=5$, and also $D(\P,N)=5$ for $N=17,18,19$. Finally, for $N>20$, $[N]$ can be partitioned into $[8],[9,16],\dots,[N-k-7,N-k], [N-k+1,N]$, where $k\in \{0,1,10,19,20,5,6,7\}$ with $k\equiv N\pmod{8}$, and Lemma \ref{tinv} is applied to establish $D(\P,N)=\lceil N/4 \rceil$.
\end{proof}

To clarify the compatibility of Theorem \ref{pform} with Conjecture \ref{locconj}, we note that $d_{\P}(4)=1$ because there are no primes divisible by $4$, so $1/4 = \mu(\P)\geq \sup_{m\in \N} d_{\P}(m)/m\geq d_{\P}(4)/4=1/4,$ and hence all quantities involved are equal to $1/4$. Similarly for Theorem \ref{sform}, $h(x)=x^2+3$ has no root modulo $3$, so $d_{S+3}(3)=1$ and $\mu(S+3)\geq \sup_{m\in \N} d_{S+3}(m)/m\geq 1/3$, and the formula proven below establishes equality.

\begin{proof}[Proof of Theorem \ref{sform}] Maximum clique computations, as described in Section \ref{compsec}, establish the formula for $N\leq 50$, the most exceptional cases being $D(S+1,9)=5$ and $D(S+1,24)=10$, with lower bounds exhibited by the sets $\{1,2,5,8,9\}$ and $\{1,2,5,8,9,16,17,20,23,24\}$, respectively. The lower bound $D(S+1,N)\geq \lceil N/3 \rceil$ for all $N$ is exhibited by $\{n\leq N: n\equiv 1 \ (\text{mod }3)\}$.  Then, for $N>42$, $[N]$ can be partitioned into $[15],[16,30],\dots,[N-k-14,N-k], [N-k+1,N]$, where $k\in \{0,1,32,33,4,35,36,7,8,39,40,41,42,13,14\}$ with $k\equiv N\pmod{15}$, and Lemma \ref{tinv} is applied to establish $D(S+1,N)\leq \lceil N/3 \rceil$. \end{proof}

In the two preceeding results, the formula $D(X,N)=\lceil N/m^*\rceil$ holds for sufficiently large $N$, where $m^*=\min\left\{m\in \N: (X\cap m\Z) =\emptyset\right\}$. We note that this does not hold in general for $X=h(\Z)$, $h\in \Z[x]$, as the supremum in Conjecture \ref{locconj} is not necessarily $1/m^{*}$. For example, the set $X=S+3$ intersects $2\Z$, $3\Z$, and $4\Z$, but $\mu(S+3)\geq 1/4$ as exhibited by the set $A=\{n\in \N: n\equiv 1\text{ or }3  \ (\text{mod }8) \}$, in other words $d_{S+3}(8)=2$. Further, even when numerical evidence suggests $\mu(X)=1/m^*$, we can still have $D(X,N)>\lceil N/m^*\rceil$ infinitely often, as shown in the following example.    
 
\begin{theorem}\label{Sp2} The lower bound $D(S+2,N)>\lceil N/4 \rceil$ holds unless $N = 4k^2+5$ for some $k\in \N$. More specifically, letting $j=\lceil N/4 \rceil$, we have $$D(S+2,N)\geq j + \begin{cases} 1 & N\equiv 2 \ (\textnormal{mod }4) \text{ or }(N\equiv 0 \text{ or }3 \ (\textnormal{mod }4) \text{ and }4j-2\in S+2 ) \\ \ & \text{or }(N\equiv 1 \ (\textnormal{mod }4) \text{ and }4j-6 \notin S+2) \\ 2 & N\equiv 0 \text{ or }3 \ (\textnormal{mod }4) \text{ and }4j-2\notin S+2 \\ 0 & N\equiv 1 \ (\textnormal{mod }4) \text{ and }4j-6 \in S+2  \end{cases}. $$

\end{theorem}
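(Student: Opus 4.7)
The plan is to prove each lower bound via an explicit construction, organized by cases on $N \bmod 4$. A repeated observation throughout is that $n^2 \equiv 0$ or $1 \pmod{4}$, so $S+2$ lies entirely in residue classes $2$ and $3$ modulo $4$; consequently, any difference that is $\equiv 0$ or $1 \pmod{4}$ is automatically safe, and only certain differences that are $\equiv 2$ or $3 \pmod{4}$ can be forbidden.

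For $N\equiv 2\pmod{4}$, the set $A_0\cup\{N\}$ with $A_0=\{n\in[N]:n\equiv 1\pmod{4}\}$ is an $X$-set of size $j+1$, since differences within $A_0$ are $\equiv 0\pmod{4}$ and $N-a\equiv 1\pmod{4}$ for every $a\in A_0$. For $N\equiv 3\pmod{4}$, I instead shift and take $A_1=\{n\in[N]:n\equiv 2\pmod{4}\}\cup\{N\}$, of size $j+1$, with every difference again $\equiv 0$ or $1\pmod{4}$. If additionally $4j-2\notin S+2$, adjoining $1$ gives a set of size $j+2$; the only new risky difference is $N-1=4j-2\equiv 2\pmod{4}$, safe by hypothesis. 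The case $N\equiv 0\pmod{4}$ is entirely parallel, using $\{n\in[N]:n\equiv 3\pmod{4}\}\cup\{N\}$ of size $j+1$ and optionally adjoining $2$ when $4j-2\notin S+2$, the risky difference being $N-2=4j-2$.

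The remaining case $N\equiv 1\pmod{4}$ is subtler, because $N$ itself lies in the natural residue class $\{n\equiv 1\pmod{4}\}$, leaving no obvious new endpoint to append. The key idea is to reduce to the previous case: setting $N':=N-2$, we have $N'\equiv 3\pmod{4}$ and $j(N')=j-1$, so the hypothesis $4j-6\notin S+2$ translates exactly into the hypothesis $4j(N')-2\notin S+2$ of the $N'\equiv 3\pmod{4}$ subcase. The resulting $X$-set of size $j(N')+2=j+1$ lies inside $[N']\subseteq[N]$, establishing $D(S+2,N)\geq j+1$. (For the excepted values $N=4k^2+5$, this reduction produces only an $X$-set of size $j(N')+1=j$, consistent with the $+0$ branch.)

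The main obstacle is recognizing this reduction for the $N\equiv 1\pmod{4}$ case, since naive attempts to extend $A_0=\{1,5,\ldots,N\}$ by a single element fail: for $N\geq 9$, every $b\in[N]\setminus A_0$ creates a forbidden difference of $3$ or $6$ with some element of $A_0$. Once the reduction is spotted, all verifications reduce to checking that the constructed differences lie in allowed residue classes $\pmod{4}$, with a single $\equiv 2\pmod{4}$ difference in each subcase controlled by the stated arithmetic hypothesis on $4j-2$ or $4j-6$.
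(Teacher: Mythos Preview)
Your proof is correct and follows essentially the same approach as the paper: both rest on the observation that $S+2$ meets only the residue classes $2,3\pmod 4$, and both build the extremal sets from a single residue class modulo $4$ together with one or two extra endpoints, so that every nonzero difference is either $\equiv 0,1\pmod 4$ or equal to the single ``risky'' gap $4j-2$ (or $4j-6$). The only difference is organizational---the paper works throughout with the fixed base set $\{1,2,6,10,\dots,4j-6\}$, treats $N\equiv 1,2\pmod 4$ directly, and reduces $N\equiv 0,3$ \emph{upward} to $N+1$ or $N+2$, whereas you vary the residue class with $N\bmod 4$, treat $N\equiv 0,2,3$ directly, and reduce $N\equiv 1$ \emph{downward} to $N-2$; the resulting sets coincide up to translation.
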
 
\begin{proof} Let $X=S+2$ and note that elements of $X$ are positive and congruent to $2$ or $3$ modulo $4$. Fix $N\in \N$, let $j=\lceil N/4 \rceil$, and let $A=\{1,2,6,10,\dots, 4j-6\}$, so $|A|=j$.  Further, differences in $A$ are either $0$ modulo $4$, positive and $1$ modulo $4$, or negative and $3$ modulo $4$, hence none lie in $X$, so $A$ is an $X$-set. If $N\equiv 2 \pmod{4}$, then $4j-2 =N$, and $A\cup\{4j-2\}$ is an $X$-set for the same reasons as $A$, completing the proof in this case. If $N \equiv 1 \pmod{4}$, then $A'=A\cup\{4j-5\} \subseteq [N-2]$, and differences in $A'$ are either differences in $A$, positive and $1$ modulo $4$, negative and $3$ modulo $4$, or $4j-6$. The only one of these that can possibly lie in $X$ is $4j-6$, so $A'$ is an $X$-set unless $4j-6\in X$, completing the proof for $N \equiv 1 \pmod{4}$. Moreover, if $4j-6=n^2+2$, $n$ must be even, so $4j-6=(2k)^2+2$, hence $N=4j-3=4k^2+5$. 

\noindent Finally, if $N\equiv 0\textnormal{ or }3 \ (\textnormal{mod }4)$, we apply the previous case for $N+1$ or $N+2$, respectively, which are congruent to $1$ mod $4$, noting that $\lceil(N+1)/4\rceil$ and $ \lceil(N+2)/4\rceil$ are equal to $j+1$ in the respective cases.
\end{proof}

\noindent While we do not provide a proof that $\mu(S+2)=1/4$, maximum clique computations, as discussed in the next section, yield that the lower bound in Theorem \ref{Sp2} holds with equality for all $51\leq N\leq 150$. In particular, $\mu(S+2)\leq D(S+2,149)/149=38/149\approx 0.255$.






\section{Maximum cliques and computational data} \label{compsec}

In an effort to collect computational data for the quantities $D(S,N)$ and $D(\P-1,N)$, we make a connection between forbidden differences and graph theory, as was done previously in \cite{Lewko}.

\begin{definition} For a graph $G$, a \textit{clique} of $G$ is a complete subgraph $C\subseteq G$. Further, $C$ is called a \textit{maximum clique} of $G$ if $|C|\geq |C'|$ for all cliques $C'\subseteq G$.
\end{definition}

\noindent Note the distinction between a maximum clique and a \textit{maximal clique}, which is a clique that is not contained in any strictly larger clique. The problem of developing algorithms that take a graph as input and produce a single maximum clique, a list of all maximum cliques, or just the size of a maximum clique, is a well-studied problem in computer science that is known to be NP-complete. Maximum cliques are directly connected with the forbidden differences problems discussed in this paper in the following way: given $X\subseteq \Z$ and $N\in \N$, we can build a graph $G$ with vertices in $[N]$ by connecting vertex $i$ with vertex $j$ if and only if neither $i-j$ nor $j-i$ lie in $X$. Then, $D(X,N)$ is precisely the size of a maximum clique in $G$. Further, by translation invariance, we can instead identify the vertices of the graph with $\{0,1,\dots,N-1\}$, and restrict our search to maximum cliques containing $0$. In particular, instead of working with the full graph $G$, we can restrict to  the neighbors of $0$, and add $0$ to the returned maximum clique. A pseudocode description of the construction of this `ZeroNeighborhoodGraph' is given below.

\begin{algorithm}
\textbf{ZeroNeighborhoodGraph}
\begin{algorithmic}
\State{\textbf{Input:} $N\in \N,X\subseteq \Z$}
\State $V \gets \{\}$
\State $E \gets \{\}$
\For{$1\leq i \leq N-1$}
	\If{$i \notin X$ and $-i\notin X$}  
	\State $V\gets V\cup\{i\}$
	\For{$1\leq j<i$}
		\If{$j\in V$ and $j-i \notin X$ and $i-j\notin X$} $E\gets E\cup\{\{i,j\}\}$
		\EndIf
	\EndFor
	\EndIf
	\State $i \gets i+1$
	
	\EndFor \\
\Return $(V,E)$
\end{algorithmic}
\end{algorithm}

We use the existing MaxCliqueDyn algorithm \cite{mcd}, developed in \cite{konc}, implemented in C++, which takes in a graph $G=(V,E)$ as input and outputs $(M,C)$, where $C\subseteq G$ is a maximum clique and $M=|C|$. Rather than run the algorithm for each $N$, which would result in a great deal of redundancy, we instead take a `top-down' approach. For example, suppose we input $N=280$ and $X=S$, build the ZeroNeighborhood graph $G$, compute $(53,C)=\text{MaxCliqueDyn}(G)$, and add $0$ to $C$ to get a maximum clique $C'$ of the full graph $G'$, with $|C'|=54$. Suppose further that, with elements sorted in increasing order, $C'=\{0,\dots,268\}$. In this case, we know that $C'$ is a maximum clique not only for $N=280$, but in fact for all $269\leq N \leq 280$, meaning $D(S,N)=54$ for $269\leq N\ \leq 280$, and we can skip over $11$ values of $N$ and continue the top-down progression by inputting $N=268$. A pseudocode description of this `cascade' process, for $N$ in a chosen range, is provided below.

\begin{algorithm}[H]
\textbf{MaxCliqueCascade}
\begin{algorithmic}
\State{\textbf{Input:} Min, Max$\in \N$, $X\subseteq\Z$}
\State $N \gets$ Max
\While{$N>$ Min} 
	\State Print $N$
	\State $G \gets$ ZeroNeighborhoodGraph$(N,X)$
	\State $(M,C) \gets$ MaxCliqueDyn$(G)$
	\State Print $M$ and $C$ 
	
	\State $N' \gets \max(C)$
	\Comment $D(X,n)=M+1$ with optimal example $A=\{1\}\cup (C+1)$ for all $N'+1\leq n \leq N$
	\State $N \gets N'$
	
	\EndWhile
\end{algorithmic}
\end{algorithm}
\noindent With computing time totaling a few days on personal laptops, we collected the following values for $D(S,N)$.

\begin{table}[H]
\caption{Exact values for $D(S,N)$ computed with MaxCliqueCascade.}
\centering
\begin{minipage}[c]{.3\linewidth}
\centering
\renewcommand{\arraystretch}{1.5}
\begin{tabular}{|| c || c ||}
\hline
$N$ & \textbf{$D(S,N)$} \\
\hline
$1 \leq N \leq 2$ & $1$ \\
\hline
$3 \leq N \leq 5$ & $2$ \\
\hline
$6 \leq N \leq 7$ & $3$ \\
\hline
$8 \leq N \leq 10$ & $4$ \\
\hline
$11 \leq N \leq 12$ & $5$ \\
\hline
$13 \leq N \leq 15$ & $6$ \\
\hline
$16 \leq N \leq 17$ & $7$ \\
\hline
$18 \leq N \leq 20$ & $8$ \\
\hline
$21 \leq N \leq 22$ & $9$ \\
\hline
$23 \leq N \leq 34$ & $10$ \\
\hline
$35 \leq N \leq 37$ & $11$ \\
\hline
$38 \leq N \leq 42$ & $12$ \\
\hline
$43 \leq N \leq 47$ & $13$ \\
\hline
$48 \leq N \leq 52$ & $14$ \\
\hline
$53 \leq N \leq 57$ & $15$ \\
\hline
$58 \leq N \leq 65$ & $16$ \\
\hline
$66 \leq N \leq 67$ & $17$ \\
\hline
$68 \leq N \leq 70$ & $18$ \\
\hline
$71 \leq N \leq 72$ & $19$ \\
\hline
\end{tabular}
\end{minipage}
\begin{minipage}[c]{.3\linewidth}
\centering
\renewcommand{\arraystretch}{1.5}
\begin{tabular}{|| c || c ||}
\hline
$N$ & \textbf{$D(S,N)$} \\
\hline
$73 \leq N \leq 80$ & $20$ \\
\hline
$81 \leq N \leq 85$ & $21$ \\
\hline
$86 \leq N \leq 91$ & $22$ \\
\hline
$92 \leq N \leq 96$ & $23$ \\
\hline
$97 \leq N \leq 101$ & $24$ \\
\hline
$102 \leq N \leq 106$ & $25$ \\
\hline
$107 \leq N \leq 111$ & $26$ \\
\hline
$112 \leq N \leq 117$ & $27$ \\
\hline
$118 \leq N \leq 119$ & $28$ \\
\hline
$120 \leq N \leq 124$ & $29$ \\
\hline
$125 \leq N \leq 130$ & $30$ \\
\hline
$131 \leq N \leq 132$ & $31$ \\
\hline
$133 \leq N \leq 137$ & $32$ \\
\hline
$138 \leq N \leq 143$ & $33$ \\
\hline
$144 \leq N \leq 145$ & $34$ \\
\hline
$146 \leq N \leq 150$ & $35$ \\
\hline
$151 \leq N \leq 156$ & $36$ \\
\hline
$157 \leq N \leq 158$ & $37$ \\
\hline
$159 \leq N \leq 163$ & $38$ \\
\hline
\end{tabular}
\end{minipage}
\begin{minipage}[c]{.3\linewidth}
\centering
\renewcommand{\arraystretch}{1.5}
\begin{tabular}{|| c || c ||}
\hline
$N$ & \textbf{$D(S,N)$} \\
\hline
$164 \leq N \leq 188$ & $39$ \\
\hline
$189 \leq N \leq 198$ & $40$ \\
\hline
$199 \leq N \leq 202$ & $41$ \\
\hline
$203 \leq N \leq 205$ & $42$ \\
\hline
$206 \leq N \leq 207$ & $43$ \\
\hline
$208 \leq N \leq 218$ & $44$ \\
\hline
$219 \leq N \leq 222$ & $45$ \\
\hline
$223 \leq N \leq 235$ & $46$ \\
\hline
$236 \leq N \leq 241$ & $47$ \\
\hline
$242 \leq N \leq 247$ & $48$ \\
\hline
$248 \leq N \leq 252$ & $49$ \\
\hline
$253 \leq N \leq 257$ & $50$ \\
\hline
$258 \leq N \leq 262$ & $51$ \\
\hline
$263 \leq N \leq 265$ & $52$ \\
\hline
$266 \leq N \leq 268$ & $53$ \\
\hline
$269 \leq N \leq 282$ & $54$ \\
\hline
$ 283\leq N \leq 284$ & $55$ \\
\hline
$ 285 \leq N \leq 287$ & $56$ \\
\hline
$ 288 \leq N \leq 292$ & $57$ \\
\hline
$ 293 \leq N \leq 300$ & $58$ \\
\hline
\end{tabular}
\end{minipage}
\end{table}
\newpage
\noindent While we have examples of optimal square-difference free sets for all $N\leq 300$, we omit them here for the sake of brevity. However, we provide one notable example below, foreshadowed prior to the pseudocode description of MaxCliqueCascade: a square difference free set $A\subseteq [269]$ with $|A|=54$, which remains optimal until $N$ reaches $283$.

\begin{align*} A = \{& 1, 4, 6, 9, 11, 14, 16, 21, 28, 33, 38, 48,  51, 59,
66, 72, 79, 86, 89, 94,  96, 107, \\& 113, 118, 124, 126,
131, 139, 144, 146, 152, 157, 163, 174, 176, 181,
184, 191, \\& 204,  211, 214, 219, 222, 232, 237, 242,
249, 254, 256, 259, 261, 264, 266, 269\}\end{align*}

\noindent In addition, with computing time totaling about one day on a personal laptop, we collected the following values for $D(\P-1,N)$. The algorithm runs faster for $X=\P-1$ because the corresponding graphs are substantially sparser.

\begin{table}[H]
\caption{Exact values for $D(\P-1,N)$ computed with MaxCliqueCascade.}
\centering
\begin{minipage}[c]{.48\linewidth}
\centering
\renewcommand{\arraystretch}{1.5}
\begin{tabular}{|| c || c ||}
\hline
$N$ & \textbf{$D(\mathcal{P}-1,N)$} \\
\hline

$1\leq N \leq 3$ & $1$ \\
\hline
$4\leq N \leq 8$ & $2$ \\
\hline
$9 \leq N \leq 11$ & $3$ \\
\hline
 $12 \leq N \leq 32$ & $4$ \\
\hline
$33 \leq N \leq 35$ & $5$ \\
\hline
$36 \leq N \leq 48$ & $6$ \\
\hline
$49 \leq N \leq 51$ & $7$ \\
\hline
$52 \leq N \leq 64$ & $8$ \\
\hline
$65 \leq N \leq 67$ & $9$ \\
\hline
$68 \leq N \leq 104$ & $10$ \\
\hline
$105 \leq N \leq 107$ & $11$ \\
\hline
108 $\leq N \leq$ 132 & 12 \\
\hline
133 $\leq N \leq$ 135 & 13 \\
\hline

\hline
\end{tabular}
\end{minipage}
\begin{minipage}[c]{.48\linewidth}
\centering
\renewcommand{\arraystretch}{1.5}
\begin{tabular}{|| c || c ||}
\hline
$N$ & \textbf{$D(\mathcal{P}-1,N)$} \\
\hline

$136 \leq N \leq 152$ & $14$ \\
\hline
$153 \leq N \leq 155$ & $15$ \\
\hline
$156 \leq N \leq 208$ & $16$ \\
\hline
$209 \leq N \leq 211$ & $17$ \\
\hline
$212 \leq N \leq 216$ & $18$ \\
\hline
$217 \leq N \leq 219$ & $19$ \\
\hline
$220 \leq N \leq 242$ & $20$ \\
\hline
$243 \leq N \leq 245$ & $21$ \\
\hline
$246 \leq N \leq 298$ & $22$ \\
\hline
$299 \leq N \leq 301$ & $23$ \\
\hline
$302 \leq N \leq 488$ & $24$ \\
\hline
$489 \leq N \leq 491$ & $25$ \\
\hline
$492 \leq N \leq 500$ & $26$ \\
\hline

\hline
\end{tabular}
\end{minipage}
\end{table}
As with the squares, we include the most notably dense example of a $(p-1)$-difference free set, in this case $A\subseteq [302]$ with $|A|=24$, which is optimal until $N$ reaches $489$: $A = \{ 1, 4, 57, 60, 65, 68, 91, 94, 141, 144, 155,\\ 158, 175, 178,
189, 192, 209, 212, 265, 268, 273, 276, 299, 302\}$
  
\section{Survey of Known Bounds} \label{kbs}

The following is a (to our knowledge) comprehensive survey of known upper and lower bounds on $D(X,N)$ for intersective $X$, with results ranging from the mid 1970s to 2025. We separate the bounds into three tables: squares, $\P-1$, and others, the last of which predominantly consists of more general polynomial images. An asterisk ($^*$) on a citation indicates the current best-known upper bound of its type, while a double asterisk ($^{**}$) indicates the current best-known lower bound of its type. We use the Vinogradov symbol $\ll$ to denote `less than a constant times', and implied constants may depend on parameters \textit{other than} $N$. We use $c$ to denote a positive constant, which must also be independent of $N$.

\setlength{\extrarowheight}{3pt}

\begin{table}[H]

\centering

\caption{Bounds on $D(S,N)$, where $S$ is the set of squares} \label{Stable}


\begin{tabular}{||c||c||c||c||} 

\hline

Bound on $D(S,N)$ & Due to \\

\hline\hline

$o(N)$  & Furstenberg  \cite{Furst} \\

\hline

$\leq N(\log N)^{-\frac{1}{3}+o(1)}$ & S\'ark\"ozy \cite{Sark1}  \\

\hline

$\gg N^{\frac{1+\frac{\log 7}{\log 65}}{2}} = N^{0.7331\dots}$ & Ruzsa \cite{Ruz2}  \\

\hline

$\ll N(\log N)^{-\frac{1}{12}\log\log\log\log N}$ & Pintz, Steiger, Szemer\'edi \cite{PSS} \\

\hline

$\gg N^{\frac{1+\frac{\log 12}{\log 205}}{2}} = N^{0.7334\dots}$ & Lewko \cite{Lewko}$^{**}$   \\

\hline

$\ll N(\log N)^{-c\log\log\log N}$ & Bloom, Maynard \cite{BloomMaynard} \\

\hline

$\ll N\exp(-c\sqrt{\log N})$ & Green, Sawhney \cite{GreenNew2}$^*$ \\

\hline

\end{tabular}

\end{table}
\begin{table}[H]

\centering

\caption{Bounds on $D(\P-1,N)$, where $\P-1 = \{p-1: p \text{ prime}\} = \{1,2,4,6,10,12,\dots\}$}


\begin{tabular}{||c||c||}

\hline

Bound on $D(\P-1,N)$ & Due to \\

\hline\hline

$\leq N(\log \log N)^{-2+o(1)}$ & S\'ark\"ozy \cite{Sark3}  \\

\hline

$\geq N^{\left(\frac{\log 2}{2}-o(1) \right)/\log\log N}$ & Ruzsa \cite{Ruz3}$^{**}$  \\

\hline

$\ll N(\log\log N)^{-c\log\log\log\log\log N}$ & Lucier \cite{Lucier2}   \\

\hline

$\ll N\exp(-c(\log N)^{1/4})$ & Ruzsa, Sanders \cite{Ruz}  \\

\hline

$\ll N\exp(-c(\log N)^{1/3})$ & Wang \cite{Wang}  \\

\hline

$\ll N^{1-c}$ & Green \cite{GreenNew}$^*$  \\

\hline

\end{tabular}

\end{table}

All of the best upper bounds in Tables \ref{Stable}-\ref{otherX} are obtained through Fourier analytic density increment arguments, with the exception of Green's \cite{GreenNew} breakthrough bound $D(\P-1,N)\ll N^{1-c}$, which follows from a quantitative improvement of the \textit{van der Corput property} for shifted primes. All of the lower bounds in Tables \ref{Stable}-\ref{otherX}, with the exception of the greedy algorithm, have their roots in constructions of Ruzsa \cite{Ruz2, Ruz3}, which bridge the modular version of the problem (i.e. forbidden differences in $\Z/m\Z$) and the integer version. In Table \ref{otherX}, we use $d_X(m)$ as defined in Conjecture \ref{locconj}. Further, the terms \textit{strongly Deligne} and \textit{$\P$-Deligne} refer to large families of intersective and $\P$-intersective multivariate polynomials, respectively, satisfying appropriate nonsingularity conditions, as defined in \cite{DR} and \cite{DR2}.

 \


\begin{table}[H] 

\centering
\small

\caption{Bounds on $D(X,N)$ for other $X$} 
\label{otherX}

\begin{tabular}{||c||c||c||} 
\hline
 
$X$ & Bound on $D(X,N)$ & Due to \\

\hline \hline \ & \ & \ \\

any $X\subseteq \N$ & $\geq \dfrac{N-1}{|X\cap[N]|+1}$ & greedy algorithm (see \cite{Lyall})   \\

\ & \ & \ \\

\hline

$h(\Z)$, $h\in \Z[x]$ intersective  & $o(N)$ & Kamae, Mend\'es-France \cite{KMF}   \\

\hline

$\{n^k : n\in \N\}$ & $\gg N^{\frac{k-1+\frac{\log d_X(m)}{\log m}}{k}},$ \ $m$ squarefree & Ruzsa \cite{Ruz2}$^{**}$ \\

\hline

$\{n^k : n\in \N\}$ & $\ll N(\log N)^{-\frac{1}{4}\log\log\log\log N}$ & Balog, Pelik\'an,   \\

\ & \ &  Pintz, Szemer\'edi \cite{BPPS}  \\

\hline

$h(\Z)$, $h\in \Z[x]$, & $\ll N/\log\log\log N$ & Slijep\v{c}evi\'c \cite{Slip}   \\

$h(0)=0$ & \ & \  \\

\hline

$h(\Z)$, $h\in \Z[x]$ & $\leq N(\log N)^{-\frac{1}{\deg(h)-1}+o(1)}$ & Lucier \cite{Lucier}   \\

intersective & \ & \  \\

\hline

$h(\P)$, $h\in \Z[x]$, & $\ll N/\log\log\log N$ & Li, Pan \cite{lipan}   \\

$h(1)=0$ & \ & \  \\
\hline

$h(\Z)$, $h\in \Z[x]$ & $\leq N(\log N)^{\left(-\frac{1}{\log 3}+o(1)\right)\log\log\log\log N}$& Hamel, Lyall, Rice \cite{HLR} \\

intersective, $\deg(h)=2$ & \ & \  \\

\hline

$h(\P)$, $h\in \Z[x]$, $\P$-intersective & $\leq N(\log N)^{-\frac{1}{2(\deg(h)-1)}+o(1)}$ & Rice \cite{Rice}   \\

\hline

$h(\Z)$, $h\in \Z[x]$ & $\leq N(\log N)^{\Big(-\frac{1}{\log\left(\frac{k^2+k}{2}\right)}+o(1)\Big)\log\log\log\log N}$ & Rice \cite{ricemax}   \\

intersective, $\deg(h)=k$  & \ & \   \\

\hline

$\{am^2+bmn+cn^2: m,n\in \N\}$ & $\ll N\exp(-c\sqrt{\log N})$ & Rice \cite{Ricebin}   \\

$a,b,c \in \Z$, $b^2-4ac\neq 0$  & \ & \   \\

\hline

$\{m^2+n^2 : m,n\in \N\}$ & $\gg \sqrt{N}$ & Younis \cite{Younis}$^{**}$   \\

\hline

$h(\Z^{\l})$,  & $\ll N\exp(-c(\log N)^{\mu}),$ & \   \\

$h\in \Z[x_1,\dots,x_{\l}],$ $\l\geq 2$,  & $\mu=\begin{cases} [(\deg(h)-1)^2+1]^{-1} & \text{if }\l=2 \\ 1/2 &\text{if }\l \geq 3  \end{cases} $ & Doyle, Rice \cite{DR}$^*$    \\

strongly Deligne  & \ & \  \\

\hline

$h(\Z)$, $h\in \Z[x]$ intersective & $\ll N(\log N)^{-c\log\log\log N}$ & Arala \cite{Arala}$^*$  \\

\hline

$h(\P)$, $h\in \Z[x]$, $\P$-intersective & $\ll N(\log N)^{-c\log\log\log N}$ & Doyle, Rice  \cite{DR2}$^*$  \\

\hline

$h(\P^{\l})$,  & $\ll N\exp(-c(\log N)^{\mu}),$ & \  \\

$h\in \Z[x_1,\dots,x_{\l}],$  & $\mu=\begin{cases} [2(\deg(h)-1)^2+6]^{-1} & \text{if }\l=2 \\ 1/4 &\text{if }\l \geq 3  \end{cases}$ & Doyle, Rice \cite{DR2}$^*$ \\

$\l \geq 2$, $\P$-Deligne  & \ & \  \\
\hline
\end{tabular}
\end{table}

\noindent \textbf{Acknowledgements:} This research was initiated during the Summer 2024 Kinnaird Institute Research Experience at Millsaps College. All authors were supported during the summer by the Kinnaird Endowment, gifted to the Millsaps College Department of Mathematics. At the time of completion, all authors except Alex Rice and Andrew Lott were Millsaps College undergraduate students. Alex Rice was partially supported by an AMS-Simons research grant for PUI faculty.  The authors would like to thank Drewrey Lupton and Philipp Birklbauer for their code-related assistance, and John Griesmer for his helpful insights and references.

\end{document}